\newtheorem{thm}{Theorem}[section]
\newtheorem*{thm*}{Theorem}
\newtheorem{prop}[thm]{Proposition}
\theoremstyle{definition}
\newtheorem{dfn}[thm]{Definition}
\newtheorem*{dfn*}{Definition}
\newtheorem{ques}[thm]{Question}
\theoremstyle{remark}
\newtheorem*{ac}{Acknowledgments}
\newtheorem*{claim*}{Claim}
\numberwithin{equation}{thm}
\def\p{\mathfrak{p}}
\def\X{\mathcal{X}}
\def\G{\mathsf{G}}
\def\S{\mathsf{S}}
\def\tr{\operatorname{\mathsf{Tr}}}
\def\tf{\mathsf{TF}}
\def\min{\mathsf{min}}
\def\Ext{\mathsf{Ext}}
\def\Hom{\mathsf{Hom}}
\def\mod{\operatorname{\mathsf{mod}}}
\def\syz{\mathsf{Syz}}
\def\dim{\operatorname{\mathsf{dim}}}
\def\grade{\operatorname{\mathsf{grade}}}
\def\depth{\operatorname{\mathsf{depth}}}
\def\h{\operatorname{\mathsf{ht}}}
\title{When are $n$-syzygy modules $n$-torsionfree?}
\address{Graduate School of Mathematics, Nagoya University, Furocho, Chikusaku, Nagoya, Aichi 464-8602, Japan}
\author{Hiroki Matsui} 
\email{m14037f@math.nagoya-u.ac.jp}
\author{Ryo Takahashi}
\address{Graduate School of Mathematics, Nagoya University, Furocho, Chikusaku, Nagoya, Aichi 464-8602, Japan}
\email{takahashi@math.nagoya-u.ac.jp}
\urladdr{http://www.math.nagoya-u.ac.jp/~takahashi/}
\author{Yoshinao Tsuchiya} 
\address{Graduate School of Mathematics, Nagoya University, Furocho, Chikusaku, Nagoya, Aichi 464-8602, Japan}
\email{m15034w@math.nagoya-u.ac.jp}
\thanks{2010 {\em Mathematics Subject Classification.} 13D02, 13C60, 13H10}
\thanks{{\em Key words and phrases.} $n$-torsionfree module, $n$-syzygy module, Serre's condition $(\S_n)$, Gorenstein ring}
\begin{document}
\baselineskip=15pt
\begin{abstract}
Let $R$ be a commutative noetherian ring.
We consider the question of when $n$-syzygy modules over $R$ are $n$-torsionfree in the sense of Auslander and Bridger.
Our tools include Serre's condition and certain conditions on the local Gorenstein property of $R$.
Our main result implies the converse of a celebrated theorem of Evans and Griffith.
\end{abstract}
\maketitle
\section{Introduction}

The notion of $n$-torsionfree modules was introduced by Auslander and Bridger \cite{ABr} to treat the theory of torsionfree modules over integral domains in general situations.
They proved that all $n$-torsionfree modules are $n$-syzygy.
It is thus natural to ask when the converse holds, that is:

\begin{ques}\label{q}
When are $n$-syzygy modules $n$-torsionfree?
\end{ques}

This question has been investigated by several authors so far, and among other things, the following result of Evans and Griffith is celebrated; see \cite[Theorem 3.8]{EG} and also \cite[Lemma 1.3]{LW}.

\begin{thm}[Evans--Griffith] \label{eg}
Let $R$ be a commutative noetherian ring.
Let $n\ge0$ be an integer.
Suppose that $R$ satisfies $(\G_{n-1})$ and $(\S_n)$.
Then for an $R$-module $M$ one has:
$$
\text{$M$ is $n$-syzygy}\ \Longleftrightarrow\ 
\text{$M$ is $n$-torsionfree}\ \Longleftrightarrow\ 
\text{$M$ satisfies $(\S_n)$.}
$$
\end{thm}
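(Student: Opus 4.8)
The plan is to exploit two facts already available from Auslander--Bridger theory: that $M$ is $n$-torsionfree exactly when $\Ext^i_R(\tr M,R)=0$ for $1\le i\le n$, and that every $n$-torsionfree module is $n$-syzygy (recalled in the introduction). Granting the latter, it suffices to prove the cycle ``$M$ is $n$-syzygy $\Rightarrow$ $M$ satisfies $(\S_n)$'' and ``$M$ satisfies $(\S_n)$ $\Rightarrow$ $M$ is $n$-torsionfree'', since then
$$\text{$n$-torsionfree}\ \Rightarrow\ \text{$n$-syzygy}\ \Rightarrow\ (\S_n)\ \Rightarrow\ \text{$n$-torsionfree}.$$
Throughout I would reduce statements about $\Ext^i_R(\tr M,R)$ to local ones, using the base-change isomorphism $\Ext^i_R(\tr M,R)_\p\cong\Ext^i_{R_\p}(\tr(M_\p),R_\p)$.

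For ``$n$-syzygy $\Rightarrow$ $(\S_n)$'' I would take an exact sequence $0\to M\to F_{n-1}\to\cdots\to F_0$ with the $F_j$ free, break it into short exact sequences, and apply the depth lemma at each prime $\p$. Since $\depth(F_j)_\p=\depth R_\p$, this yields $\depth M_\p\ge\min\{\depth R_\p,n\}$. Feeding in $(\S_n)$ for $R$, namely $\depth R_\p\ge\min\{n,\dim R_\p\}$, a short case analysis gives $\depth M_\p\ge\min\{n,\dim R_\p\}$, which is $(\S_n)$ for $M$. This step uses only Serre's condition on $R$.

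The heart is ``$(\S_n)$ $\Rightarrow$ $n$-torsionfree'', which I would prove by induction on $n$, the cases $n\le1$ being handled directly through the four-term sequence $0\to\Ext^1_R(\tr M,R)\to M\to M^{**}\to\Ext^2_R(\tr M,R)\to0$. Assuming the result for $n-1$ and observing that $(\S_{n-1})$ and $(\G_{n-2})$ follow from the hypotheses, $M$ is already $(n-1)$-torsionfree, so $\Ext^i_R(\tr M,R)=0$ for $1\le i\le n-1$, and only $\Ext^n_R(\tr M,R)=0$ remains. Writing $T=\tr M$, I would fix a free resolution $G_\bullet\to T$ and pass to the dual complex $C^\bullet=\Hom_R(G_\bullet,R)$, whose cohomology computes $\Ext^\bullet_R(T,R)$; the vanishing of $H^i(C^\bullet)$ for $i<n$ splices $C^\bullet$ into short exact sequences that realize $M$ as an iterated cosyzygy and exhibit $N:=\Ext^n_R(T,R)$ as $H^n(C^\bullet)$.

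Now I would localize at an associated prime $\p$ of $N$ and derive a contradiction, and this localization argument is the main obstacle. First, $(\G_{n-1})$ confines $\operatorname{Supp}N$ to $\{\p:\h\p\ge n\}$: if $\h\p\le n-1$ then $R_\p$ is Gorenstein, so $\id_{R_\p}R_\p=\dim R_\p<n$ and $\Ext^n_{R_\p}(-,R_\p)=0$. Hence $\h\p\ge n$, whereupon $(\S_n)$ for $R$ gives $\depth R_\p\ge n$ and $(\S_n)$ for $M$ gives $\depth M_\p\ge n$ (primes outside $\operatorname{Supp}M$ are harmless, since there $\tr(M_\p)$ is free). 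Propagating depth down the localized short exact sequences, with the free terms contributing depth $\ge n$, I obtain depth at least $2$ for the cosyzygy of $M$ sitting just before $C^n$; pairing this with $\depth N_\p=0$ in the final short exact sequence forces the second-syzygy module $\ker(C^n\to C^{n+1})$ to have depth $0$ at $\p$, whereas a second-syzygy module must have depth $\ge\min\{2,\depth R_\p\}=2$ there -- a contradiction, so $N=0$. The delicate points are the exact balance between the Gorenstein-in-codimension hypothesis, which pushes $\operatorname{Supp}\Ext^n_R(\tr M,R)$ into high codimension via finite injective dimension, and the Serre condition on $M$, which supplies the depth; I expect the off-by-one bookkeeping in the cosyzygy filtration, needed to land the ``second syzygy has depth $\ge2$'' contradiction precisely at the associated primes of $N$, together with the hand-checked low cases $n\le1$ where $M^{**}$ replaces the filtration, to be where the argument requires the most care.
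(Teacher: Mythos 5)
The paper gives no proof of Theorem \ref{eg}: it is quoted as a known theorem of Evans and Griffith, with pointers to \cite[Theorem 3.8]{EG} and \cite[Lemma 1.3]{LW}, and is then used as a black box (it is exactly the implication $(2)\Rightarrow(4)$ in the proof of Theorem \ref{main}). So you are not deviating from an internal argument but reconstructing the cited one, and your reconstruction is correct. The cyclic decomposition is the right one: ``$n$-torsionfree $\Rightarrow$ $n$-syzygy'' is \cite[Theorem (2.17)]{ABr}, and ``$n$-syzygy $\Rightarrow(\S_n)$'' is the depth-lemma computation the paper records as Proposition \ref{prop}.5(a). For the main arrow ``$(\S_n)\Rightarrow n$-torsionfree'' your bookkeeping does close up exactly: if $\p$ is an associated prime of $N=\Ext^n_R(\tr M,R)$, then $(\G_{n-1})$ forces $\h\p\ge n$ (a Gorenstein $R_\p$ of dimension at most $n-1$ has injective dimension less than $n$, killing $\Ext^n$), so $(\S_n)$ for $R$ and for $M$ give $\depth R_\p\ge n$ and $\depth M_\p\ge n$; the $n-2$ cosyzygy steps from $M$ down to the image of $C^{n-1}\to C^n$ lose at most one unit of depth each, leaving depth at least $2$ there, which is exactly the threshold needed to deduce $\depth(\ker(C^n\to C^{n+1}))_\p=0$ from $\depth N_\p=0$ (in a short exact sequence $0\to A\to B\to C\to 0$ one gets $\depth B=\depth C$ only when $\depth A\ge\depth C+2$) and to contradict the second-syzygy depth bound. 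Two points to make explicit in a full write-up: $\operatorname{coker}(C^0\to C^1)$ agrees with $M$ only up to projective summands (Proposition \ref{prop}.1), which is harmless because $\depth R_\p\ge n$ at the primes in play; and $n=1$ genuinely needs the separate treatment via $\Ext^1_R(\tr M,R)=\ker(M\to M^{**})\subseteq M$ that you indicate, since the cosyzygy chain only begins at $C^2$.
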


Here, $(\S_n)$ is Serre's condition, while $(\G_{n-1})$ is a certain condition on the local Gorenstein property of a commutative noetherian ring.
See Definition \ref{def} in the next section for the definitions of the notions appearing in the theorem.
Recently, Goto and Takahashi \cite{GT} have proved that the converse of Theorem \ref{eg} holds under the additional assumption that $R$ is local.

The purpose of this paper is to proceed with the study of Question \ref{q}.
Our main result given in the next section includes the following.

\begin{thm}\label{it}
Let $R$ be a commutative noetherian ring.
Let $n\ge0$ be an integer.
Then the following three conditions are equivalent.
\begin{enumerate}[\rm(1)]
\item
The ring $R$ satisfies $(\G_{n-1})$ and $(\S_n)$.
\item
For an $R$-module $M$ one has:
$$
\text{$M$ is $n$-syzygy}\ \Longleftrightarrow\ 
\text{$M$ is $n$-torsionfree}\ \Longleftrightarrow\ 
\text{$M$ satisfies $(\S_n)$.}
$$
\item
For an $R$-module $M$ one has:
$$
\text{$M$ is $(n+1)$-syzygy}\ \Longleftrightarrow\ 
\text{$M$ is $(n+1)$-torsionfree.}
$$
\end{enumerate}
\end{thm}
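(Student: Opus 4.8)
The plan is to establish the two equivalences (1) $\Leftrightarrow$ (2) and (1) $\Leftrightarrow$ (3) separately; together these give the assertion. The forward directions will come from Theorem \ref{eg} together with a transpose computation, while the reverse directions constitute the converse of Evans--Griffith and are where the real work lies.

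First, (1) $\Rightarrow$ (2) is exactly Theorem \ref{eg}. For (1) $\Rightarrow$ (3), recall that every $(n+1)$-torsionfree module is $(n+1)$-syzygy, so only the converse needs proof. Let $M$ be $(n+1)$-syzygy. Then $M$ is in particular $n$-syzygy, so Theorem \ref{eg} shows that $M$ is $n$-torsionfree; equivalently $\Ext^i_R(\tr M,R)=0$ for $1\le i\le n$, and it remains only to prove the single vanishing $\Ext^{n+1}_R(\tr M,R)=0$. Writing $M$ as the first syzygy $\Omega C$ of an $n$-syzygy module $C$ and comparing depths along $0\to\Omega C\to F\to C\to 0$ shows that $M$ satisfies $(\S_{n+1})$. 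I would then prove the refinement of Evans--Griffith that, over a ring satisfying $(\G_{n-1})$ and $(\S_n)$, an $n$-torsionfree module satisfying $(\S_{n+1})$ is automatically $(n+1)$-torsionfree. Using the Auslander--Bridger identification $\Ext^{n+1}_R(\tr M,R)\cong\Ext^{n-1}_R(M^\ast,R)$ (valid for $n\ge 2$; the cases $n\le 1$ are handled directly), one localizes and checks the vanishing prime by prime: at a prime $\p$ with $\h\p\le n-1$ the ring $R_\p$ is Gorenstein with $\operatorname{id}R_\p=\dim R_\p\le n-1$, which forces the group to vanish (the boundary case $\h\p=n-1$ uses that $M_\p$ is then maximal Cohen--Macaulay, so $M_\p^\ast$ has no higher self-$\Ext$ over the Gorenstein ring $R_\p$), while at a prime with $\h\p\ge n$ the condition $(\S_{n+1})$ on $M$ supplies the depth needed to kill the group.

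The reverse implications (2) $\Rightarrow$ (1) and (3) $\Rightarrow$ (1) are the converse of Evans--Griffith without the local hypothesis, and here lies the main obstacle. The strategy is contraposition: assuming $R$ fails $(\G_{n-1})$ or $(\S_n)$, I would produce an $R$-module that is $n$-syzygy (resp.\ $(n+1)$-syzygy) but not $n$-torsionfree (resp.\ $(n+1)$-torsionfree), thereby violating (2) (resp.\ (3)). Both ring conditions are detected at a single prime $\p$, and over the local ring $R_\p$ the result of Goto and Takahashi guarantees a discrepancy between being a syzygy and being torsionfree at the appropriate level. The difficulty is that \emph{being a syzygy does not descend along localization}, so one cannot simply localize an abstract local counterexample. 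Instead I would build a global test module---for instance a sufficiently high syzygy of a cyclic module such as $R/\p$---that is a genuine syzygy over $R$ by construction, and detect its failure of torsionfreeness at $\p$ through the isomorphism $\Ext^i_R(\tr M,R)_\p\cong\Ext^i_{R_\p}(\tr M_\p,R_\p)$, using that both the transpose and $\Ext$ commute with localization. Engineering such a module, so that the local defect supplied by Goto--Takahashi is realized by a globally defined syzygy, is precisely the step that removes the local assumption and is the crux of the proof.

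Finally, assembling (1) $\Leftrightarrow$ (2) and (1) $\Leftrightarrow$ (3) yields the equivalence of all three conditions. I expect the hardest point to be the globalization in the previous paragraph; the forward direction (1) $\Rightarrow$ (3) is comparatively routine once the refinement of Evans--Griffith is in hand, its only delicate point being the prime-by-prime $\Ext$ vanishing at primes of height at least $n$.
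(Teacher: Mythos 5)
Your proposal has two genuine gaps. The first is in $(1)\Rightarrow(3)$: the claim that an $(n+1)$-syzygy module $M=\Omega C$ satisfies $(\S_{n+1})$ is false. The depth lemma applied to $0\to\Omega C\to F\to C\to0$ only gives $\depth M_\p\ge\min\{\depth R_\p,\ \depth C_\p+1\}$, and since $R$ is only assumed to satisfy $(\S_n)$, the term $\depth R_\p$ caps this at $\min\{n,\h\p\}$. Concretely, if $R$ is local with $\depth R=n<\dim R$ and satisfies $(\G_{n-1})$ and $(\S_n)$ (for $n=1$ take $k[[x,y,u,v]]/((x,y)\cap(u,v))$, which is reduced of depth $1$ and dimension $2$), then the free module $M=R$ is an $(n+1)$-syzygy yet fails $(\S_{n+1})$ at the maximal ideal. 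So the hypothesis of your ``refinement of Evans--Griffith'' is not available, and the whole route through $(\S_{n+1})$ collapses. The paper avoids this entirely: it proves $(1)\Leftrightarrow(3)$ by showing both are equivalent to the condition $(\widetilde\G_{n-1})$ (that $R_\p$ is Gorenstein whenever $\depth R_\p\le n-1$); the equivalence of $(\widetilde\G_{n-1})$ with $\tf_{n+1}(R)=\syz_{n+1}(R)$ is already in Auslander--Bridger, globally, with no Serre condition needed.

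The second gap is that the converse directions, which you correctly identify as the crux, are not actually proved: you name the obstacle (being a syzygy does not descend along localization) and announce that you would ``engineer a global test module,'' but no construction is given, so the hardest part of the theorem remains open in your write-up. Note also that for $(3)\Rightarrow(1)$ no globalization of Goto--Takahashi is needed at all: one uses the Auslander--Bridger equivalence just mentioned together with the elementary implication $(\widetilde\G_{n-1})\Rightarrow(\G_{n-1})+(\S_n)$. For $(2)\Rightarrow(1)$ the paper does not build a counterexample module either; it first observes that $(2)$ forces $\syz_n(R)=\S_n(R)$ to be closed under extensions, then, for a prime $\p$ with $\depth R_\p\le n-1$, combines the grade inequalities $\grade_R\Ext_R^i(R/\p,R)\ge i-1$ for $1\le i\le n$ (coming from $\tf_n(R)=\syz_n(R)$ via Auslander--Bridger) with the global Theorem A of Goto--Takahashi to obtain $\grade_R\Ext_R^n(R/\p,R)\ge n$, localizes at $\p$, and deduces $\Ext_{R_\p}^n(\kappa(\p),R_\p)=0$, which by the criterion of Fossum--Foxby--Griffith--Reiten forces $R_\p$ to be Gorenstein. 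If you want to salvage your plan, this grade-theoretic machinery, rather than an explicit global counterexample, is what you would need to supply.
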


It looks surprising that the conditions (2) and (3) are equivalent, both in the sense that (3) does not contain any requirements on Serre's condition but (2) does, and in the sense that (2) is on the number $n$ while (3) is on $n+1$.
Furthermore, this theorem implies that the converse of Theorem \ref{eg} due to Evans and Griffith holds true even if $R$ is non-local, which is not covered by \cite{GT}.

In the next section, we will first recall the precise definitions of the notions appearing above, and then state our main result and prove it.

\section{The main result}

Throughout this section, let $R$ be a commutative noetherian ring.
Denote by $\mod R$ the category of finitely generated $R$-modules.
We begin with recalling several definitions.

\begin{dfn}\label{def}
\begin{enumerate}[(1)]
\item
A full subcategory $\X$ of $\mod R$ is said to be {\it closed under extensions} provided that for each exact sequence $0 \to L \to M \to N \to 0$ in $\mod R$, if $L$ and $N$ are in $\X$, then so is $M$.   
\item
Let $P_1 \xrightarrow{f} P_0 \to M \to 0$ be an exact sequence in $\mod R$ with $P_0,P_1$ projective.
The {\em (Auslander) transpose} of $M$ is defined as the cokernel of the homomorphism
$$
\Hom_R(f, R): \Hom_R(P_0, R) \to \Hom_R(P_1, R)
$$ 
and denoted by $\tr M$.
\item
Let $n\ge0$ be an integer.
Let $M$ be a finitely generated $R$-module.
We say that
\begin{enumerate}[(a)]
\item
$M$ is {\it $n$-syzygy} if there exists an exact sequence $0 \to M \to P_{n-1} \to P_{n-2} \to \cdots \to P_1 \to P_0$ in $\mod R$ with each $P_i$ projective.
\item
$M$ is {\it $n$-torsionfree} if $\Ext_R^i(\tr M, R)=0$ for all $1 \le i \le n$.
\item
$M$ satisfies {\it Serre's condition $(\S_n)$} if the inequality $\depth M_{\p} \ge \min \{n, \h \p \}$ holds for all prime ideals $\p$ of $R$.
\end{enumerate}
\item
For an integer $n\ge-1$ the ring $R$ is said to satisfy the condition $(\G_n)$ (resp. $(\widetilde\G_n)$) if the local ring $R_{\p}$ is Gorenstein for every prime ideal $\p$ of $R$ with $\h \p$ (resp. $\depth R_\p$) is at most $n$.
\end{enumerate}
\end{dfn}

We denote by $\syz_n(R)$, $\tf_n(R)$ and $\S_n(R)$ the full subcategories of $\mod R$ consisting of $n$-syzygy modules, $n$-torsionfree modules and modules satisfying $(\S_n)$, respectively.
Here are some remarks and basic properties which we will use.

\begin{prop}\label{prop}
\begin{enumerate}[\rm(1)]
\item
The transpose is uniquely determined up to projective summands.
\item
Every $R$-module is both $0$-syzygy and $0$-torsionfree, and satisfies $(\S_0)$.
\item
The ring $R$ always satisfies $(\G_{-1})$ and $(\widetilde{\G}_{-1})$.
\item
The finitely generated projective $R$-modules are both $n$-syzygy and $n$-torsionfree for all integers $n\ge0$.
\item
Let $n$ be a non-negative integer.
The following assertions hold.
\begin{enumerate}[\rm(a)]
\item
If $R$ satisfies $(\S_n)$, then so do all the $n$-syzygy modules.
\item
The full subcategory $\S_n(R)$ of $\mod R$ is closed under extensions.
\end{enumerate}
\end{enumerate}
\end{prop}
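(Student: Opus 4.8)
The plan is to dispose of parts (1)--(4) by formal means and then concentrate on part (5), which carries the actual content. For (2) and (3) I would merely unwind the definitions and note that the stated conditions are vacuous: when $n=0$ the defining sequence $0\to M\to P_{n-1}\to\cdots\to P_0$ has no terms to the right of $M$, so every module is $0$-syzygy; $0$-torsionfreeness asks for $\Ext_R^i(\tr M,R)=0$ over the empty range $1\le i\le 0$; and $(\S_0)$ reads $\depth M_\p\ge\min\{0,\h\p\}=0$, which is automatic. Similarly no prime $\p$ has $\h\p\le-1$ or $\depth R_\p\le-1$, so $(\G_{-1})$ and $(\widetilde\G_{-1})$ hold. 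For (4), given a finitely generated projective $P$, the exact sequence $0\to P\to P\to 0\to\cdots\to 0$ with first map the identity exhibits $P$ as $n$-syzygy, while the trivial presentation with $P_1=0$ forces $\tr P=0$ up to projective summands, whence $\Ext_R^i(\tr P,R)=0$ for all $i$ and $P$ is $n$-torsionfree.

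For (1) I would use the standard comparison of projective presentations. Given two presentations $P_1\xrightarrow{f}P_0\to M\to 0$ and $Q_1\xrightarrow{g}Q_0\to M\to 0$, I would lift $\id_M$ to chain maps in both directions; projectivity of the terms makes these mutually inverse up to homotopy, and a Schanuel-type argument then shows that after adding trivial complexes of the form $R^a\to R^a$ the two presentations become isomorphic. Applying $\Hom_R(-,R)$ sends such trivial complexes back to isomorphisms, so the induced comparison between the two cokernels is an isomorphism modulo free summands; hence $\tr M$ is well defined up to projective summands.

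The real work is in (5), for which I would invoke the depth lemma for a short exact sequence $0\to A\to B\to C\to 0$ of $R_\p$-modules, namely $\depth B\ge\min\{\depth A,\depth C\}$ and $\depth A\ge\min\{\depth B,\depth C+1\}$. Part (b) is then immediate: localizing $0\to L\to M\to N\to 0$ at $\p$ and using the first inequality gives $\depth M_\p\ge\min\{\depth L_\p,\depth N_\p\}\ge\min\{n,\h\p\}$, so $M$ satisfies $(\S_n)$. For (a), given an $n$-syzygy $M$ with exact sequence $0\to M\to P_{n-1}\to\cdots\to P_0$, I would split it through the images $B_j=\operatorname{im}(P_j\to P_{j-1})$ into short exact sequences and prove, by induction on $n$ via the second inequality and the fact that each $(P_i)_\p$ is free of depth $\depth R_\p$, the local estimate $\depth M_\p\ge\min\{n,\depth R_\p\}$. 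Combining this with $\depth R_\p\ge\min\{n,\h\p\}$, which is $(\S_n)$ for $R$ itself, yields $\depth M_\p\ge\min\{n,\h\p\}$ in both regimes $\h\p\ge n$ and $\h\p<n$.

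The main obstacle I anticipate is the bookkeeping in (5a): one must run the induction cleanly, track the degenerate localizations in which a projective vanishes (handled by the convention $\depth 0=+\infty$, which is harmless inside the minima), and carefully verify the final comparison $\min\{n,\depth R_\p\}\ge\min\{n,\h\p\}$. The other delicate point is (1), which is classical but genuinely requires the stable, up-to-projectives formulation, since over a non-local ring minimal presentations need not exist.
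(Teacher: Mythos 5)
Your proposal is correct and follows essentially the same route as the paper: parts (2)--(4) by unwinding definitions, part (1) by the standard comparison of projective presentations (which the paper simply cites from Auslander--Bridger), and part (5) by the depth lemma applied to the localized short exact sequences, exactly as the paper indicates. Your inductive estimate $\depth M_\p\ge\min\{n,\depth R_\p\}$ combined with $(\S_n)$ for $R$ is the intended argument for (5a), and your handling of the degenerate localizations is appropriate.
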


\begin{proof}
The statement (1) is follows from \cite[Proposition (2.6)(c)]{ABr}, while (2) and (3) are evident.
For a finitely generated projective $R$-module $P$, the sequence $0\to P\xrightarrow{=}P\to0\to\cdots\to0$ is exact, and $\tr P=0$.
This shows (4).
The two assertions of (5) can be proved by using the depth lemma.
\end{proof}

The following theorem is the main result of this paper, including Theorem \ref{it}, whence including the converse of Theorem \ref{eg}.
(Note that Theorem \ref{it} asserts the equivalences $(2)\Leftrightarrow(4)\Leftrightarrow(6)$.)

\begin{thm}\label{main}
Let $R$ be a commutative noetherian ring.
Let $n$ be a non-negative integer.
Consider the following eight conditions.
\begin{enumerate}[\ \quad\rm(1)]
\item
$R$ satisfies $(\widetilde{\G}_{n-1})$.
\item
$R$ satisfies $(\G_{n-1})$ and $(\S_n)$. 
\item
$\tf_n(R)=\S_n(R)$.
\item
$\tf_n(R)=\syz_n(R)=\S_n(R)$.
\item
$\tf_{n}(R)=\syz_n(R)$, and $\syz_n(R)$ is closed under extensions.
\item
$\tf_{n+1}(R)=\syz_{n+1}(R)$.
\item
$\syz_n(R)=\S_n(R)$.
\item
$R$ satisfies $(\S_n)$, and $\syz_n(R)$ is closed under extensions.
\end{enumerate}
Then the implications $(1)\Leftrightarrow(2)\Leftrightarrow(3)\Leftrightarrow(4)\Leftrightarrow(5)\Leftrightarrow(6)\Rightarrow(7)\Rightarrow(8)$ hold.
If $R$ is local, all the eight conditions are equivalent.
\end{thm}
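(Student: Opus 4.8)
The plan is to split the eight conditions into a ``commutative algebra'' core, a ``cheap'' downhill part, a self-contained shift comparing levels $n$ and $n+1$, and a single genuinely local input. First I would dispatch $(1)\Leftrightarrow(2)$, which is pure depth-versus-height bookkeeping and needs no module theory: if $R$ satisfies $(\widetilde{\G}_{n-1})$ and some prime $\p$ violated $(\S_n)$, then $\depth R_\p<\min\{n,\h\p\}$ would give $\depth R_\p\le n-1$, hence $R_\p$ Gorenstein, hence Cohen--Macaulay, so $\depth R_\p=\h\p\ge\min\{n,\h\p\}$, a contradiction; thus $(\S_n)$ holds, and for $\h\p\le n-1$ one reads off $\depth R_\p=\h\p\le n-1$ and Gorensteinness, i.e. $(\G_{n-1})$. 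Running the same bookkeeping backwards gives $(2)\Rightarrow(1)$, and from now on I treat $(1)$ and $(2)$ interchangeably.

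Next I would record the downhill implications. The step $(2)\Rightarrow(4)$ is exactly Theorem \ref{eg}. For $(3)\Leftrightarrow(4)$, note that $R$ is projective, hence $n$-torsionfree (Proposition \ref{prop}(4)), so $\tf_n(R)=\S_n(R)$ already forces $R$ to satisfy $(\S_n)$; then Proposition \ref{prop}(5) gives $\tf_n(R)\subseteq\syz_n(R)\subseteq\S_n(R)$, and the hypothesis collapses this sandwich to $(4)$, while $(4)\Rightarrow(3)$ is trivial. Under $(4)$ we have $\syz_n(R)=\S_n(R)$, which is extension-closed by Proposition \ref{prop}(5), giving $(4)\Rightarrow(5)$; similarly $(4)\Rightarrow(7)$ is immediate, and $(7)\Rightarrow(8)$ follows because $R\in\syz_n(R)=\S_n(R)$ yields $(\S_n)$ and $\syz_n(R)=\S_n(R)$ is extension-closed. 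Thus once $(6)$ is linked to the rest, the entire downhill part $(2)\Rightarrow(3),(4),(5),(7),(8)$ and $(6)\Rightarrow(7)\Rightarrow(8)$ is in hand.

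The mechanism tying the levels together is a self-contained shift lemma $(5)\Leftrightarrow(6)$. Writing an $(n+1)$-syzygy $M$ as $M=\syz C$ through a short exact sequence $0\to M\to P\to C\to0$ with $P$ projective and $C\in\syz_n(R)$, I would compare $(n+1)$-torsionfreeness of $M$ with $n$-torsionfreeness of $C$ by means of the transpose and the Auslander--Bridger exact sequence $0\to\Ext^1_R(\tr M,R)\to M\to M^{\ast\ast}\to\Ext^2_R(\tr M,R)\to0$ of \cite{ABr}; the additional vanishing required to upgrade level $n$ to level $n+1$ along such sequences is exactly what extension-closure of $\syz_n(R)$ supplies, and its failure manufactures an $(n+1)$-syzygy that is not $(n+1)$-torsionfree. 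To complete the equivalence of $(1)$--$(6)$ I would then descend to each localization $R_\p$: the transpose and Serre's condition commute with localization, every finitely generated $R_\p$-module is the localization of a finitely generated $R$-module, and both an $n$-syzygy over $R_\p$ and an extension of $n$-syzygies over $R_\p$ lift to $R$ by clearing denominators, using $\Ext^1_{R_\p}(-,-)\cong\Ext^1_R(-,-)_\p$. Hence ``$\tf_n(R)=\syz_n(R)$'' and ``$\syz_n(R)$ extension-closed'' each pass to $R_\p$, so condition $(5)$, equivalently $(6)$, localizes.

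The main obstacle, and the only place where locality is essential, is the converse in the local setting: for local $R$ one must squeeze $(\G_{n-1})$, and where it is not yet visible also $(\S_n)$, out of the purely categorical hypotheses $(5)$/$(8)$. Here I would invoke the local converse of Evans--Griffith due to Goto and Takahashi \cite{GT}, feeding it the hypotheses produced by the descent above; the substantive point is the contrapositive construction showing that if $R_\p$ fails to be Gorenstein at a prime of height at most $n-1$ (with $(\S_n)$ in force), then the transpose yields an $n$-syzygy over $R_\p$ that is not $n$-torsionfree, so that $\syz_n(R_\p)$ cannot be extension-closed---contradicting the localized form of the hypothesis. Assembling these conclusions over all primes recovers global $(2)$, closes the cycle $(1)\Leftrightarrow\cdots\Leftrightarrow(6)$, and, with the downhill implications, yields the full equivalence when $R$ is local. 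I expect this transpose-based detection of non-Gorensteinness, together with the bookkeeping needed to extract $(\S_n)$ from $(5)$, to be the genuinely hard step.
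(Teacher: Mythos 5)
Your downhill bookkeeping is sound and matches the paper: $(1)\Leftrightarrow(2)$ by depth--height comparison (your direct proof of $(2)\Rightarrow(1)$ is even a small improvement, since the paper only recovers it via the cycle), $(2)\Rightarrow(4)$ from Theorem \ref{eg}, the sandwich $\tf_n(R)\subseteq\syz_n(R)\subseteq\S_n(R)$ for $(3)\Rightarrow(4)$, and $(4)\Rightarrow(5),(7)\Rightarrow(8)$ from Proposition \ref{prop}. The problems are in the return trip, and there are two genuine gaps.

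First, your ``self-contained shift lemma'' $(5)\Leftrightarrow(6)$ is asserted, not proved. The sketch via $0\to M\to P\to C\to0$ and the sequence $0\to\Ext^1_R(\tr M,R)\to M\to M^{**}\to\Ext^2_R(\tr M,R)\to0$ does not explain how $(n+1)$-torsionfreeness of $M$ is controlled by $n$-torsionfreeness of $C$ (the transpose does not simply commute with taking syzygies), nor how extension-closedness of $\syz_n(R)$ supplies ``the additional vanishing,'' nor how its failure ``manufactures'' a non-$(n+1)$-torsionfree $(n+1)$-syzygy. This is essentially the content of \cite[Theorem (2.17) and Proposition (4.21)]{ABr}, which the paper simply cites to get $(1)\Leftrightarrow(6)$; if you want to reprove it you must actually do so. Second, and more seriously, your route from $(5)$ back to $(2)$ is circular. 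After localizing $(5)$ at $\p$ (itself only sketched, though the lifting of $n$-syzygies via $M\approx\Omega^n C$ and of extensions via $\Ext^1_R(-,-)_\p\cong\Ext^1_{R_\p}(-,-)$ can be made to work), you propose to feed the result into the local theorem of \cite{GT}. But \cite[Theorem B]{GT}, as used in the paper for $(2)\Leftrightarrow(7)\Leftrightarrow(8)$, takes $(\S_n)$ as part of its hypothesis, and condition $(5)$ --- unlike $(3)$, $(4)$, $(7)$, $(8)$ --- does not contain $(\S_n)$ even implicitly. You acknowledge that ``extracting $(\S_n)$ from $(5)$'' is the hard step, but you give no argument for it, and I do not see one along your lines. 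The paper's actual proof of $(5)\Rightarrow(1)$ sidesteps this entirely by aiming at $(\widetilde{\G}_{n-1})$ rather than $(\G_{n-1})$ plus $(\S_n)$: it is a condition on primes of small \emph{depth}, so one fixes $\p$ with $\depth R_\p\le n-1$, uses $\tf_n(R)=\syz_n(R)$ with \cite[Proposition (2.26) and Corollary (4.18)]{ABr} to get $\grade_R\Ext^i_R(R/\p,R)\ge i-1$, uses extension-closedness with \cite[Theorem A]{GT} (not Theorem B) to push $\grade_R\Ext^n_R(R/\p,R)$ up to $n$, and concludes $\Ext^n_{R_\p}(\kappa(\p),R_\p)=0$ and hence Gorensteinness of $R_\p$ by \cite[Theorem 1.1]{FFGR} --- no prior knowledge of $(\S_n)$ is needed, and $(\S_n)$ then falls out of $(1)\Rightarrow(2)$. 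You should restructure your argument around $(\widetilde{\G}_{n-1})$ in this way; as written, the closing of the cycle does not go through.
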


\begin{proof}
It is obvious that the implications $(3)\Leftarrow(4)\Rightarrow(7)$ hold.
The implication $(2)\Rightarrow(4)$ is nothing but Theorem \ref{eg}.
The equivalence $(1) \Leftrightarrow (6)$ follows from \cite[Theorem (2.17) and Proposition (4.21)]{ABr}.
Note by Proposition \ref{prop}.4 that each of the conditions (3) and (7) implies that the $R$-module $R$ belongs to $\S_n(R)$, i.e., $R$ satisfies $(\S_n)$.
Thus the implication $(7)\Rightarrow(8)$ follows by Proposition \ref{prop}.5(b).
The combination of Proposition \ref{prop}.5(a) with \cite[Theorem (2.17)]{ABr} shows the inclusions
$$
\tf_n(R)\subseteq\syz_n(R)\subseteq\S_n(R)
$$
if $R$ satisfies $(\S_n)$, which yields the implication $(3) \Rightarrow (4)$.
Proposition \ref{prop}.5(b) also gives $(4) \Rightarrow (5)$.
When $R$ is local, the equivalences $(2) \Leftrightarrow (7) \Leftrightarrow (8)$ follow from \cite[Theorem B]{GT}.
Consequently, it suffices to prove that the implications $(1) \Rightarrow (2)$ and $(5) \Rightarrow (1)$ hold.

Assume that $R$ satisfies $(\widetilde{\G}_{n-1})$.
Then $R$ satisfies $(\G_{n-1})$ since for each prime ideal $P$ of $R$ it holds that $\h P=\dim R_P\ge \depth R_P$.
Fix a prime ideal $\p$ of $R$.
If $R_\p$ has depth at least $n$, then we have $\depth R_\p\ge n\ge\min\{n,\h\p\}$.
Suppose that $R_\p$ has depth at most $n-1$.
Then $R_\p$ is Gorenstein by assumption, and in particular it is Cohen--Macaulay.
Hence we get
$$
\depth R_{\p} = \dim R_\p= \h \p \ge \min\{n, \h \p\}.
$$
Therefore the inequality $\depth R_\p\ge\min\{n,\h\p\}$ holds for all prime ideals $\p$ of $R$, that is, $R$ satisfies $(\S_n)$.
The implication $(1)\Rightarrow(2)$ now follows.

Let us assume that the equality $\tf_n(R)=\syz_n(R)$ holds and that $\syz_n(R)$ is closed under extensions.
Let $\p$ be a prime ideal $\p$ of $R$ such that $R_\p$ has depth at most $n-1$.
Using the equality $\tf_n(R)=\syz_n(R)$ and \cite[Proposition (2.26) and Corollary (4.18)]{ABr}, we obtain the inequality
$$
\grade_{R} \Ext_R^i(R/\p, R) \ge i-1
$$
for all $1 \le i \le n$.
From the assumption that $\syz_n(R)$ is closed under extensions and \cite[Theorem A]{GT}, it follows that the $R$-module $\Ext_R^n(R/\p, R)$ has grade at least $n$.
Hence
$$
\Ext_R^j (\Ext_R^n(R/\p, R), R)=0
$$
for all integers $j <n$.
Localization at $\p$ gives rise to
$$
\Ext_{R_{\p}}^j (\Ext_{R_{\p}}^n(\kappa(\p), R_{\p}), R_{\p})=0,
$$ 
where $\kappa(\p)$ denotes the residue field of the local ring $R_\p$.
If $\Ext_{R_{\p}}^n(\kappa(\p), R_{\p})$ does not vanish, then it contains $\kappa(\p)$ as a direct summand, which implies $\Ext_{R_{\p}}^j(\kappa(\p), R_{\p}) = 0$ for all $j < n$.
This means that $R_\p$ has depth at least $n$, which contradicts our choice of $\p$.
Thus, we must have $\Ext_{R_{\p}}^n(\kappa(\p), R_{\p}) = 0$.
Applying \cite[Theorem 1.1]{FFGR}, we observe that the local ring $R_{\p}$ is Gorenstein.
Now the implication $(5) \Rightarrow (1)$ follows, and the proof of the theorem is completed.
\end{proof}

\begin{ac} 
The authors are grateful to Tokuji Araya and Kei-ichiro Iima for their various comments. 
\end{ac}


\begin{thebibliography}{99}
\bibitem{ABr}
{\sc M. Auslander; M. Bridger}, Stable module theory, {\em Mem. Amer. Math. Soc.} No. 94, {\em American Mathematical Society, Providence, R.I.}, 1969.
\bibitem{EG}
{\sc E. G. Evans; P. Griffith}, Syzygies, London Mathematical Society Lecture Note Series, 106, {\em Cambridge University Press, Cambridge}, 1985.
\bibitem{FFGR}
{\sc R. Fossum; H.-B. Foxby; P. Griffith; I. Reiten}, Minimal injective resolutions with applications to dualizing modules and Gorenstein modules, {\em Inst. Hautes \'{E}tudes Sci. Publ. Math.} No. 45 (1975), 193--215.
\bibitem{GT}
{\sc S. Goto; R. Takahashi}, Extension closedness of syzygies and local Gorensteinness of commutative rings, {\em Algebr. Represent. Theory} (to appear), \texttt{arXiv:1502.06653}.
\bibitem{LW}
{\sc G. Leuschke; R. Wiegand}, Ascent of finite Cohen-Macaulay type, {\em J. Algebra} {\bf 228} (2000), no. 2, 674--681.
\end{thebibliography}
\end{document}